\def\ladate{28 octobre 2010}
\newtheorem*{theoreme}{Théorème}
\newtheorem*{proposition}{Proposition}
\newtheorem*{remarque}{Remarque}
\newcommand{\cX}{{\mathcal X}}
\newcommand{\cY}{{\mathcal Y}}
\newcommand{\cU}{{\mathcal U}}
\newcommand{\cV}{{\mathcal V}}
\DeclareMathOperator{\diag}{diag}
\begin{document}

\title{Un déterminant reproduisant}

\author{Jean-Fran\c cois Burnol}

\date{\ladate}

\maketitle

\begin{center}
\begin{small}

\begin{abstract}   Nous calculons le déterminant \[ D_{n+1} = \left| \frac{ y_ju_i -
       x_jv_i}{l_j - k_i} \right|_{1\leq i, j \leq n+1}\]
comme fonction du dernier des sextuplets $(u,v,k;x,y,l)$, en exprimant le
résultat sous une forme qui reproduit celle des entrées de $D_{n+1}$.
\end{abstract}

\begin{quote}
%% \hfill \noindent {\tt arXiv:1010.nnnn}
%% \vskip-\baselineskip
\bigskip
Universit\'e Lille 1\\ 
UFR de Math\'ematiques\\ 
Cit\'e scientifique M2\\ 
F-59655 Villeneuve d'Ascq\\ 
France\\
burnol@math.univ-lille1.fr\\
\end{quote}
\end{small}
\end{center}

\setlength{\normalbaselineskip}{16pt}
\baselineskip\normalbaselineskip
\setlength{\parskip}{6pt}

\section{Introduction}

Considérons un opérateur différentiel  de Sturm-Liouville sur un
intervalle  $I =\; ]a,b[$, de la forme $H(f) = -  (pf')' + qf$, et l'équation aux valeurs propres associée:
\begin{equation}
  \label{eq:1}
  H(f)(t) = - p (t) f''(t) - p'(t) f'(t) + q(t) f(t) = \lambda f(t)
\end{equation}
Si $g$ est un vecteur propre pour la valeur propre $\mu$, un
calcul familier donne:
\begin{equation}
  \label{eq:2}
  (\mu - \lambda) \int_{t_0}^{t_1} f(t)g(t)\,dt = \int_{t_0}^{t_1}
  \begin{vmatrix}
    f&- p f'' - p' f' + qf\\
g& - p g'' - p' g'+ qg 
  \end{vmatrix} \,dt = \left[- p \begin{vmatrix}
 f&f'\\g&g'
  \end{vmatrix}\right]_{t=t_0}^{t= t_1}
\end{equation}
Si nous supposons que $f$ et $g$ sont assujetties toutes deux à la même
condition initiale en $t = t_0$, nous obtenons la formule classique ($\mu\neq\lambda$):
\begin{equation}
  \label{eq:3}
  \int_{t_0}^{t_1} f(t)g(t)\,dt = p(t_1) \frac{g(t_1)f'(t_1) - g'(t_1)f(t_1)
    }{\mu - \lambda}
\end{equation}  Nous supposons ici pour simplifier que nos fonctions sont à
valeurs réelles, ainsi le terme de gauche peut être vu comme un produit
scalaire. Il est souvent utile (en particulier pour calculer des projections
orthogonales)  de savoir calculer le déterminant d'une matrice de  Gram $G =
(\int f_i(t)g_j(t)\,dt)_{1\leq i,j\leq n}$ formée avec ces produits
scalaires. On peut écrire le déterminant comme une intégrale multiple
(\cite[II, Nr 68]{polyaszego}), le résultat est du type
d'un produit scalaire formé par intégration sur un pavé $n$-dimensionnel, il ressemble plus au terme de gauche de l'équation
\eqref{eq:3} qu'à son terme de droite. 

Nous montrons qu'il existe d'autres expressions, qui elles sont du type du
terme de droite de l'équation \eqref{eq:3}. Quittant le domaine de l'Analyse,
il s'agit d'évaluer le déterminant
\begin{equation}
 \label{eq:4}
 D = \left| \dfrac{y_ju_i  - x_j v_i 
      }{l_j - k_i} \right|_{1\leq i, j \leq n+1}  
\end{equation}
dépendant de $6n + 6$ indéterminées $(u_i,v_i,k_i;
x_i,y_i,l_i)$, $1\leq i\leq n +1$. C'est
ce que nous faisons ici, en isolant le dernier sextuplet $(u,v,k;x,y,l)$ et
en montrant une identité
\begin{equation}
  \label{eq:5}
  \frac1d D = \dfrac{YU-XV}{l-k}
\end{equation} où $d$ est le mineur $n\times n$ principal de $D$ (ne dépendant
pas de $(u,v,k;x,y,l)$), et où $U$, $V$, $X$, et $Y$ sont des fonctions de
$(u,v,k;x,y,l)$ et des autres variables, mais avec la propriété que  $U$ et
$V$ ne dépendent pas de $(x,y,l)$ et que $X$ et $Y$ ne dépendent pas de
$(u,v,k)$.  

Les entrées $\dfrac{yu-xv}{l-k}$ de nos déterminants ont une forme que l'on
retrouve très souvent. Nous avons déjà cité  la théorie des opérateurs 
différentiels, on peut aussi évoquer par exemple les noyaux reproduisants
de la  théorie des polynômes orthogonaux (donnés par la formule de
Christoffel-Darboux \cite{simon, szego}).
%%  ou encore dans la théorie des espaces
%% de Hilbert de fonctions entières (\cite{branges}). 
L'identité \eqref{eq:5}
montre donc que la formation de déterminants en ces noyaux reproduisants donne
un résultat de la même forme caractéristique (dans certains contextes, il
peut être utile suivant les cas de faire des permutations du
style $U\leftrightarrow V$, $X\leftrightarrow -Y$, ou même
de modifier $U$, $V$, $X$, $Y$ par des facteurs imaginaires
ou même encore d'en prendre des combinaisons linéaires
appropriées).

On retrouve comme cas particulier (pour lequel
$(x_i,y_i,l_i) = (\overline{u_i},\overline{v_i},
\overline{k_i})$, $1\leq i\leq n$) le résultat obtenu dans
\cite[Thm. 2]{trivial}. Nous y posions la question de l'établir par
des calculs de déterminants, sans faire de récurrence sur leur
taille. En fait, il suffit pour cela de prendre pour point
de départ la formule (équivalente au Thm{.} 4.2 de Okada dans
\cite{okada}) que nous avons présentée dans
\cite[Thm. 1]{painleve} et de poursuivre le calcul avec l'aide
du cas particulier commun  aux identités de Jacobi (\cite{jacobi}, \cite[VI, \textsection
175]{muir}) et de Sylvester  (\cite{sylvester}, \cite[VI, \textsection
197]{muir}) et de quelques
observations auxiliaires. Nous incluons toutes les
démonstrations nécessaires afin d'éviter au lecteur non
familier de la théorie des déterminants d'avoir à
se reporter à d'autres sources.

\section{Énoncé du théorème principal}

Nous considérons $6n+6$ indéterminées $(u_i,v_i,k_i;x_i,y_i, l_i)$, $1\leq
i\leq n+1$, et posons $u = u_{n+1}$, $v = v_{n+1}$, $k = k_{n+1}$, $x =
x_{n+1}$, $y = y_{n+1}$, $l = l_{n+1}$. Nous notons $D_{n+1}(u,v,k;x,y,l)$ le déterminant de
taille $(n+1)\times(n+1)$:
\begin{equation}
  \label{eq:50} D_{n+1}(u,v,k;x,y,l) = \left| \dfrac{y_ju_i   -  x_jv_ i}{l_j - k_i}
\right|_{1\leq i, j \leq n+1}  
\end{equation}
Soit $D_n$ son mineur $n\times n$ principal situé en haut à
gauche. Posons:
\begin{equation}\label{eq:UX}
  U_n = \frac{1}{D_n} \begin{vmatrix}
      &\vdots& & \vdots\\
      \cdot\cdot&\dfrac{y_ju_i-x_jv_i}{l_j-k_i}&\cdot\cdot &u_i \\
      &\vdots& &\vdots\\
      \cdot\cdot&\dfrac{y_j u -x_j v}{l_j-k}&\cdot\cdot &u
    \end{vmatrix}_{n+1\times n+1}
\kern-.6cm  X_n = \frac{1}{D_n} \begin{vmatrix}
      &\vdots& &\vdots\\
      \cdot\cdot&\dfrac{y_ju_i-x_jv_i}{l_j-k_i}&\cdot\cdot&\dfrac{y u_i- x v_i}{l-k_i}\\
      &\vdots& &\vdots \\
\cdot\cdot&{x_j}&\cdot\cdot&x
    \end{vmatrix}_{n+1\times n+1}
\end{equation}
\begin{equation}
\label{eq:VY}
  V_n = \frac{1}{D_n} \begin{vmatrix}
      &\vdots& & \vdots\\
      \cdot\cdot&\dfrac{y_ju_i-x_jv_i}{l_j-k_i}&\cdot\cdot &v_i \\
      &\vdots& &\vdots\\
      \cdot\cdot&\dfrac{y_j u -x_j v}{l_j-k}&\cdot\cdot &v
     \end{vmatrix}_{n+1\times n+1}
  \kern-.6cm Y_n = \frac{1}{D_n} \begin{vmatrix}
      &\vdots& &\vdots\\
      \cdot\cdot&\dfrac{y_ju_i-x_jv_i}{l_j-k_i}&\cdot\cdot&\dfrac{y u_i- x v_i}{l-k_i}\\
       &\vdots& &\vdots\\
\cdot\cdot&{y_j}&\cdot\cdot&y
    \end{vmatrix}_{n+1\times n+1}
\end{equation}

\medskip
\begin{theoreme}
On a
\begin{equation}
  \frac{D_{n+1}(u,v,k;x,y,l)}{D_n} = \frac{Y_n(x,y,l)U_n(u,v,k) - X_n(x,y,l)V_n(u,v,k)}{l-k}
\end{equation}
\end{theoreme}

\section{Preuve}

Soit
\begin{equation}
  \label{eq:7}
  E = (-1)^{\frac{n(n+1)}2} \prod_{1\leq i,j \leq n+1} (l_j -
        k_i) \cdot \left| \frac{y_ju_i-x_jv_i}{l_j-k_i}\right|_{1\leq i,j\leq n+1} 
\end{equation}
où l'on rappelle que $u = u_{n+1}$, $v = v_{n+1}$, etc\dots

D'après \cite[Thm. 4.2]{okada} ou
\cite[Thm. 1]{painleve}, plus précisément sous sa forme
\cite[éq. (21)]{painleve}, 
\begin{equation}\label{eq:6}
E=   \begin{vmatrix}
    u_1 & u_2 & \cdot\cdot & u_n &u&      x_1 & x_2 & \cdot\cdot & x_n & x\\
    k_1 u_1 & k_2 u_2 & \cdot\cdot & k_n u_n & k u&      l_1 x_1 & l_2 x_2 & \cdot\cdot
    & l_n x_n & l x\\
\vdots & \vdots & \cdot\cdot & \vdots& \vdots& \vdots& \vdots &\cdot\cdot&\vdots&\vdots\\
    k_1^n u_1 & k_2^n u_2 & \cdot\cdot & k_n^n u_n & k^n u&      l_1^n x_1 & l_2^n
    x_2 & \cdot\cdot    & l_n^n x_n & l^n x\\
    v_1 & v_2 & \cdot\cdot & v_n &v&      y_1 & y_2 & \cdot\cdot & y_n & y\\
    k_1 v_1 & k_2 v_2 & \cdot\cdot & k_n v_n & k v&      l_1 y_1 & l_2 y_2 & \cdot\cdot
    & l_n y_n & l y\\
\vdots & \vdots & \cdot\cdot & \vdots& \vdots& \vdots& \vdots &\cdot\cdot&\vdots&\vdots\\
    k_1^n v_1 & k_2^n v_2 & \cdot\cdot & k_n^n v_n & k^n v&      l_1^n y_1 & l_2^n
    y_2 & \cdot\cdot    & l_n^n y_n & l^n y
  \end{vmatrix}
\end{equation}

\begin{proof}
  Il est simple d'aller de \eqref{eq:6} à
  \eqref{eq:7}. Soit $E'$ le déterminant défini dans
  \eqref{eq:6}. Soit $K$ (resp. $L$) la matrice dont la
  $i$\ieme{} ligne est $(k_1^{i-1}, \dots, k_n^{i-1}, k^{i-1})$
  (resp. $(l_1^{i-1}, \dots, l_n^{i-1}, l^{i-1})$, $1\leq
  i\leq n+1$), et soit
  $D_u$ la matrice $\diag(u_1,\dots,u_n,u)$, et sim. $D_v$,
  $D_x$, $D_y$. Alors
  \begin{equation}
    \begin{split}
      E' =
      \begin{vmatrix}
        KD_u & L D_x \\ KD_v & L D_y
      \end{vmatrix} = \begin{vmatrix} K D_u&0\\0&KD_v
      \end{vmatrix}\begin{vmatrix}
        I&D_u^{-1} K^{-1} L D_x\\I&D_v^{-1} K^{-1} L D_y
      \end{vmatrix} \\= \det(K)^2 \det(D_u K^{-1} L D_y - D_v
      K^{-1} L D_x)
    \end{split}
  \end{equation}
Soit $C_1(t)$, \dots, $C_{n+1}(t)$ des polynômes de degrés
au plus $n$, et $C$ la matrice ayant leurs coordonnées dans
$(1,t,\dots,t^n)$ en lignes. Alors $C K= (C_i(k_j))$ et
$C L = (C_i(l_j))$. Posons $A(t) = \prod_{1\leq i\leq n+1}
(t - k_i)$ et prenons $C_i(t) = A(t)/(t-k_i)$. On obtient
$K^{-1} = \diag(A'(k_i)^{-1}) C$ donc $K^{-1} L =
\diag(A'(k_i)^{-1})(C_i(l_j))$ et
\begin{equation}
  K^{-1} L = \diag_{1\leq i \leq n+1}(A'(k_i)^{-1})(\frac1{l_j-k_i})\diag_{1\leq j \leq n+1}(A(l_j))
\end{equation}
Donc
\begin{align}
  E' &= \det(K)^2\prod_i A'(k_i)^{-1}\prod_j
  A(l_j)\det(\frac{u_i y_j - v_i x_j}{l_j-k_i})_{1\leq
    i,j\leq n+1}\\
&= (-1)^{\frac{n(n+1)}2} \prod_{1\leq i,j\leq n+1} (l_j-k_i)\det(\frac{ y_ju_i -  x_jv_i}{l_j-k_i})_{1\leq
    i,j\leq n+1}
\end{align}
Ainsi effectivement $E = E'$.
\end{proof}

On va utiliser un cas particulier commun aux identités de Jacobi
(voir \cite[VI, \textsection175]{muir}) et de Sylvester (\cite{sylvester}, \cite[VI, \textsection197]{muir}).
\begin{proposition}[Jacobi, \cite{jacobi}]
    Soit $n>2$, et $M = (m_{ij})$ une matrice $n\times
  n$. Notons $E = \det M$ et soit $D$ le mineur obtenu dans $E$
  en supprimant les deux premières lignes et les deux
  premières colonnes. Alors
  \begin{equation}
    ED =
    \begin{vmatrix}
      M_{11}&M_{12}\\M_{21}&M_{22}
    \end{vmatrix}
  \end{equation}
avec $M_{ij}$ le mineur obtenu de $E$ en supprimant la
$i$\ieme{} ligne et la $j$\ieme{} colonne. Une identité semblable vaut pour deux
lignes quelconques et deux colonnes quelconques.
\end{proposition}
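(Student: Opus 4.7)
Mon plan est d'�tablir cette identit� en multipliant $M$ par une matrice auxiliaire construite � partir de sa comatrice, � l'aide de la relation fondamentale $MA = E\cdot I_n$, o� $A = (C_{ji})_{1\leq i,j\leq n}$ est la transpos�e de la comatrice et $C_{ij} = (-1)^{i+j} M_{ij}$ le cofacteur $(i,j)$.

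Je d�finirais d'abord $Q$ comme la matrice $n\times n$ dont les deux premi�res colonnes sont celles de $A$, et dont les colonnes d'indices $3, \ldots, n$ sont les vecteurs de base canoniques $e_3, \ldots, e_n$. Cette matrice est bloc triangulaire inf�rieure avec $I_{n-2}$ en bas � droite et le bloc
\begin{equation*}
\begin{pmatrix} C_{11} & C_{21} \\ C_{12} & C_{22} \end{pmatrix}
\end{equation*}
en haut � gauche, donc son d�terminant vaut $C_{11}C_{22} - C_{12}C_{21} = M_{11}M_{22} - M_{12}M_{21}$ --- pr�cis�ment le membre de droite de la formule � d�montrer. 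Ensuite je calculerais $MQ$: en vertu de $MA = E\cdot I_n$, ses deux premi�res colonnes valent $Ee_1$ et $Ee_2$, tandis que ses colonnes d'indices $3, \ldots, n$ sont celles de $M$. La matrice $MQ$ est donc bloc triangulaire sup�rieure, avec $E\cdot I_2$ en haut � gauche et, en bas � droite, la sous-matrice $(n-2)\times(n-2)$ de $M$ dont $D$ est le d�terminant, soit $\det(MQ) = E^2 D$.

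La conclusion viendrait de la multiplicativit� du d�terminant: $E\cdot\det Q = \det(MQ) = E^2 D$, et en simplifiant par le polyn�me non nul $E$ (l'�galit� �tant une identit� polynomiale en les entr�es de $M$ consid�r�es comme ind�termin�es, on travaille dans un anneau int�gre), on obtient l'identit� voulue. L'obstacle principal ne me para�t pas d'ordre conceptuel mais calculatoire: il s'agit de surveiller les signes. En particulier, le fait que le membre de droite s'exprime avec les mineurs $M_{ij}$ et non avec les cofacteurs $C_{ij}$ repose enti�rement sur la compensation des signes $(-1)^{i+j}$ dans les produits $C_{11}C_{22}$ et $C_{12}C_{21}$, qu'il faut v�rifier soigneusement.
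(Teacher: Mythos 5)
Votre d\'emonstration est correcte, mais elle emprunte une route r\'eellement diff\'erente de celle du texte. Vous partez de la relation $MA=E\,I_n$ avec $A$ la transpos\'ee de la comatrice, et la matrice auxiliaire $Q$ (deux premi\`eres colonnes prises dans $A$, les autres \'egales \`a $e_3,\dots,e_n$) fait tout le travail~: d'un c\^ot\'e $\det Q=C_{11}C_{22}-C_{12}C_{21}=M_{11}M_{22}-M_{12}M_{21}$ --- la compensation de signes que vous signalez comme point d\'elicat est bien au rendez-vous, puisque $C_{12}C_{21}=(-1)^{3}(-1)^{3}M_{12}M_{21}=M_{12}M_{21}$ --- et de l'autre $\det(MQ)=E^{2}D$ par triangularit\'e par blocs~; la simplification par le polyn\^ome $E$, l\'egitime dans l'anneau int\`egre des polyn\^omes en les ind\'etermin\'ees $m_{ij}$, conclut. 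La preuve du texte proc\`ede autrement~: elle soustrait \`a chacune des deux premi\`eres colonnes une combinaison (g\'en\'eriquement d\'etermin\'ee) des colonnes $3,\dots,n$ annulant leurs entr\'ees sous les deux premi\`eres lignes, obtient ainsi $E$ comme produit d'un d\'eterminant $2\times2$ par $D$, puis reconna\^{\i}t les quatre coefficients de ce bloc comme $M_{ij}/D$ en appliquant la m\^eme r\'eduction aux mineurs eux-m\^emes. Votre argument \'evite toute division par $D$ et identifie d'embl\'ee le membre de droite comme un mineur de l'adjoint --- c'est pr\'ecis\'ement le point de vue de Jacobi que la Remarque qui suit la proposition \'evoque~; l'argument du texte, lui, s'\'etend mot pour mot \`a l'identit\'e de Sylvester avec $k$ lignes et colonnes bordantes, \'egalement discut\'ee dans cette Remarque. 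Les deux approches sont recevables et de co\^ut comparable.
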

\begin{proof}[Preuve de l'identité de Jacobi]
  Il suffit de faire la preuve avec les entrées de $M$ des
  indéterminées indépendantes. Il existe  une (unique) combinaison
  $\lambda_3 C_3 + \dots +\lambda_n C_n$
   des colonnes de numéros allant de  $3$ à $n$ qui, soustraite à la
  première colonne $C_1$, annule tous ses éléments sauf les
  deux du haut. De même il existe une  combinaison
  $\mu_3 C_3 + \dots +\mu_n C_n$ qui, soustraite à la
   colonne $C_2$ annule tous ses éléments sauf les deux
   premiers. Si l'on transforme le déterminant $E$ par ces
   combinaisons, on a alors une forme par bloc et
   \begin{equation}
     E =
     \begin{vmatrix}m_{11} - \sum_{3\leq j\leq n} \lambda_j
       m_{1j}&\qquad 
       m_{12} - \sum_{3\leq j\leq n} \mu_j m_{1j}\\m_{21} -
       \sum_{3\leq j\leq n}\lambda_j m_{2j}&\qquad 
       m_{22} - \sum_{3\leq j\leq n} \mu_j m_{2j}
     \end{vmatrix} \cdot D
   \end{equation}
Or dans le calcul du mineur $M_{11}$ par exemple, la même
soustraction (une fois seulement) de colonnes donne
immédiatement $  M_{11} = (m_{22} - \sum_{3\leq j\leq n} \mu_j m_{2j})\;D
$. Idem pour les trois autres mineurs et on trouve donc
\begin{equation}
  E = \begin{vmatrix}\frac{\vphantom{|_|}M_{22}}D&\frac{\vphantom{|_|}M_{21}}D\\\noalign{\medskip} \frac{\vphantom{|_|}M_{12}}D&\frac{\vphantom{|_|}M_{11}}D
     \end{vmatrix} \cdot D = \frac{M_{11} M_{22} - M_{12}
       M_{21}} D
\end{equation}
ce qui prouve le résultat.
\end{proof} 

\begin{remarque} Cette proposition est le cas $k=2$ de l'énoncé plus général
suivant, qu'on démontrerait à l'identique: soit $E =
|m_{ij}|$ de taille $n\times n$, soit $k<n$, soit $F$ le mineur
$(n-k)\times(n-k)$ diagonal inférieur droit dans $E$, soit  $G =
|g_{ij}|_{1\leq i,j\leq k}$ le déterminant $k\times k$  avec $g_{ij}$  le
mineur $(n-k+1)\times(n-k+1)$ de $E$ obtenu en bordant $F$ par $m_{ij}$ (et
$m_{iq}$, $k+1\leq q\leq n$, $m_{qj}$, $k+1\leq q\leq n$). Alors $G = F^{k-1}
E$. Sylvester \cite{sylvester} a un énoncé encore plus général en bordant par
plusieurs lignes et  colonnes. Et en ce qui concerne Jacobi, il
s'agit du calcul des mineurs de l'adjoint (formé avec les co-facteurs) de $E$. Considérons $K =
|\widetilde{m}_{ij}|_{1\leq i,j\leq k}$ le mineur principal $k\times k$ de
l'adjoint $\widetilde E$ ($k\geq2$). En appliquant ce qui vient d'être montré
à $E$ privé de la $i$\ieme{} ligne et de la $j$\ieme{} colonne, on a
$\widetilde{g}_{ij} = F^{k-2} \widetilde{m}_{ij}$. Donc $K = F^{k(2-k)}
\widetilde G = F^{k(2-k)} G^{k-1} = F E^{k-1}$. Ceci est la formule de Jacobi,
qui calcule les mineurs de $\widetilde E$. Pour des énoncés synthétiques avec
des lignes et colonnes arbitraires, voir l'article Déterminants, section F, de
\cite{edm2}. Notre proposition est F(3), Jacobi est F(2) et (le cas 
particulier vu ici de) Sylvester est F(4).
\end{remarque}

Nous appliquons ceci au déterminant de l'équation \eqref{eq:6}, en considérant
les quatre entrées $k^nu$, $k^n v$, $l^nx$, $l^n y$ dont nous notons les
co-mineurs, respectivement $\cY$, $\cX$, $\cV$, et $\cU$. À la limite pour
$l\to\infty$ on a
$ l^{-n} E = (-1)^{n+1} x \cV + y \cU + O(\frac1l)$. Or:
\begin{equation}
  \label{eq:14}
    E = (-1)^{\frac{n(n+1)}2} \prod_{1\leq i,j \leq
      n} (l_j - k_i) (l-k) \prod_{1\leq i\leq n} (l-k_i) \prod_{1\leq j\leq n}
    (l_j -k)
    \begin{vmatrix}
      &\vdots& & \vdots\\
      \cdots&\frac{y_ju_i-x_jv_i}{l_j-k_i}&\cdots &\frac{y u_i-x v_i}{l-k_i}\\
      &\vdots& &\vdots\\
    \end{vmatrix}_{1\leq i,j\leq n+1}
\end{equation}
\begin{equation}
  \label{eq:15}
  \lim_{l \to \infty}  l^{-n} E = (-1)^{\frac{n(n+1)}2} \prod_{1\leq i,j \leq
      n} (l_j - k_i) \prod_{1\leq j\leq n}
    (l_j -k)
    \begin{vmatrix}
      &\vdots& & \vdots\\
      \cdots&\frac{y_ju_i-x_jv_i}{l_j-k_i}&\cdots &{y u_i-x v_i}\\
      &\vdots& &\vdots\\
    \end{vmatrix}_{1\leq i,j\leq n+1}
\end{equation}
\begin{equation}
  \cV = (-1)^{\frac{n(n-1)}2} \prod_{1\leq i,j \leq
      n} (l_j - k_i) \prod_{1\leq j\leq n}
    (l_j -k)    \begin{vmatrix}
      &\vdots& & \vdots\\
      \cdots&\frac{y_ju_i-x_jv_i}{l_j-k_i}&\cdots &v_i \\
      &\vdots& &\vdots\\
    \end{vmatrix}_{1\leq i,j\leq n+1}
\end{equation}
\begin{equation}
  \label{eq:16}
  \cU = (-1)^{\frac{n(n+1)}2} \prod_{1\leq i,j \leq
      n} (l_j - k_i) \prod_{1\leq j\leq n}
    (l_j -k)    \begin{vmatrix}
      &\vdots& & \vdots\\
      \cdots&\frac{y_ju_i-x_jv_i}{l_j-k_i}&\cdots &u_i \\
      &\vdots& &\vdots\\
    \end{vmatrix}_{1\leq i,j\leq n+1}
\end{equation}

À la limite pour
$k\to\infty$ on a
$ k^{-n} E = u \cY + (-1)^{n+1} v \cX + O(\frac1k)$. Or:
\begin{equation}
    E = (-1)^{\frac{n(n+1)}2} \prod_{1\leq i,j \leq
      n} (l_j - k_i) (l-k) \prod_{1\leq i\leq n} (l-k_i) \prod_{1\leq j\leq n}
    (l_j -k)
    \begin{vmatrix}
      &\vdots& \\
      \cdots&\frac{y_ju_i-x_jv_i}{l_j-k_i}&\cdots\\
      &\vdots& \\
\cdots&\frac{y_ju-x_jv}{l_j-k}&\cdots
    \end{vmatrix}_{1\leq i,j\leq n+1}
\end{equation}
\begin{equation}
  \lim_{k \to \infty}  k^{-n} E = (-1)^{\frac{n(n-1)}2} \prod_{1\leq i,j \leq
      n} (l_j - k_i) \prod_{1\leq i\leq n}
    (l -k_i)
    \begin{vmatrix}
      &\vdots& \\
      \cdots&\frac{y_ju_i-x_jv_i}{l_j-k_i}&\cdots\\
      &\vdots& \\
\cdots&{y_ju-x_jv}&\cdots
    \end{vmatrix}_{1\leq i,j\leq n+1}
\end{equation}
\begin{equation}
  \cY =  (-1)^{\frac{n(n-1)}2} \prod_{1\leq i,j \leq
      n} (l_j - k_i) \prod_{1\leq i\leq n}
    (l -k_i)
    \begin{vmatrix}
      &\vdots& \\
      \cdots&\frac{y_ju_i-x_jv_i}{l_j-k_i}&\cdots\\
      &\vdots& \\
\cdots&{y_j}&\cdots
    \end{vmatrix}_{1\leq i,j\leq n+1}
\end{equation}
\begin{equation}
  \label{eq:27}
  \cX = (-1)^{\frac{n(n+1)}2} \prod_{1\leq i,j \leq
      n} (l_j - k_i) \prod_{1\leq j\leq n}
    (l -k_i)
    \begin{vmatrix}
      &\vdots& \\
      \cdots&\frac{y_ju_i-x_jv_i}{l_j-k_i}&\cdots\\
      &\vdots& \\
\cdots&{x_j}&\cdots
    \end{vmatrix}_{1\leq i,j\leq n+1}
\end{equation}
Avec les quantités $\cU$, $\cV$, $\cX$, et $\cY$ ainsi définies on obtient:
\begin{equation}
  \label{eq:30}
  ED = {\cY\cU -\cX\cV}
\end{equation}
\begin{equation}
  \label{eq:31}
  D = (-1)^{\frac{n(n-1)}2} \prod_{1\leq i,j \leq n} (l_j -
        k_i) \cdot \left| \frac{y_ju_i-x_jv_i}{l_j-k_i}\right|_{1\leq i,j\leq n} 
\end{equation}
Avec $U_n$, $V_n$, $X_n$, $Y_n$, $D_n$ définis par \eqref{eq:UX},  
\eqref{eq:VY}, 
\begin{equation}
  {\cU}= (-1)^{\frac{n(n+1)}2} {\prod_{1\leq i,j \leq
      n} (l_j - k_i) \prod_{1\leq j\leq n}
    (l_j -k)  }D_n U_n
  \end{equation}
\begin{equation}
{\cV} = (-1)^{\frac{n(n-1)}2} {\prod_{1\leq i,j \leq
      n} (l_j - k_i) \prod_{1\leq j\leq n}
    (l_j -k)  }D_n V_n
  \end{equation}
\begin{equation}
  {\cX} = (-1)^{\frac{n(n+1)}2}  {\prod_{1\leq i,j \leq
      n} (l_j - k_i) \prod_{1\leq i\leq n}
    (l -k_i)  } D_n X_n 
 \end{equation}
\begin{equation}
{\cY}  = (-1)^{\frac{n(n-1)}2} {\prod_{1\leq i,j \leq
      n} (l_j - k_i) \prod_{1\leq i\leq n}
    (l -k_i)  } D_n Y_n
 \end{equation}
On obtient finalement
\begin{equation}
  (-1)^n \left| \frac{y_ju_i-x_jv_i}{l_j-k_i}\right|_{1\leq i,j\leq n+1}
  \left| \frac{y_ju_i-x_jv_i}{l_j-k_i}\right|_{1\leq i,j\leq n} = (-1)^n D_n^2
  \frac{Y_n U_n - X_n V_n}{l-k}
\end{equation}
ce qui prouve le Théorème.

Au passage, nous avons obtenu par cette preuve des représentations de $U_n$, $V_n$, $X_n$
et $Y_n$ à l'aide de déterminants de taille $(2n+1)\times (2n+1)$:
\begin{equation}\label{eq:Ubig}
  U_n = \frac{\begin{vmatrix}
    u_1 & u_2 & \cdot\cdot & u_n &u&      x_1 & x_2 & \cdot\cdot & x_n \\
    k_1 u_1 & k_2 u_2 & \cdot\cdot & k_n u_n & k u&      l_1 x_1 & l_2 x_2 & \cdot\cdot
    & l_n x_n \\
\vdots & \vdots & \cdot\cdot & \vdots& \vdots& \vdots& \vdots &\cdot\cdot&\vdots\\
    k_1^n u_1 & k_2^n u_2 & \cdot\cdot & k_n^n u_n & k^n u&      l_1^n x_1 & l_2^n
    x_2 & \cdot\cdot    & l_n^n x_n \\
    v_1 & v_2 & \cdot\cdot & v_n &v&      y_1 & y_2 & \cdot\cdot & y_n \\
    k_1 v_1 & k_2 v_2 & \cdot\cdot & k_n v_n & k v&      l_1 y_1 & l_2 y_2 & \cdot\cdot
    & l_n y_n \\
\vdots & \vdots & \cdot\cdot & \vdots& \vdots& \vdots& \vdots &\cdot\cdot&\vdots\\
    k_1^{n-1} v_1 & k_2^{n-1} v_2 & \cdot\cdot & k_n^{n-1} v_n & k^{n-1} v&      l_1^{n-1} y_1 & l_2^{n-1}
    y_2 &\cdot\cdot    & l_n^{n-1} y_n 
  \end{vmatrix}} {(-1)^n(-1)^{\frac{n(n-1)}2} \prod_{1\leq i,j \leq
      n} (l_j - k_i)D_n \;  \prod_{1\leq j\leq n}
    (l_j -k)  }
\end{equation}
\begin{equation}\label{eq:Vbig}
V_n = \frac{\begin{vmatrix}
    u_1 & u_2 & \cdot\cdot & u_n &u&      x_1 & x_2 & \cdot\cdot & x_n \\
    k_1 u_1 & k_2 u_2 & \cdot\cdot & k_n u_n & k u&      l_1 x_1 & l_2 x_2 & \cdot\cdot
    & l_n x_n \\
\vdots & \vdots & \cdot\cdot & \vdots& \vdots& \vdots& \vdots &\cdot\cdot&\vdots\\
    k_1^{n-1} u_1 & k_2^{n-1} u_2 & \cdot\cdot & k_n^{n-1} u_n & k^{n-1} u&      l_1^{n-1} x_1 & l_2^{n-1}
    x_2 & \cdot\cdot    & l_n^{n-1} x_n \\
    v_1 & v_2 & \cdot\cdot & v_n &v&      y_1 & y_2 & \cdot\cdot & y_n \\
    k_1 v_1 & k_2 v_2 & \cdot\cdot & k_n v_n & k v&      l_1 y_1 & l_2 y_2 & \cdot\cdot
    & l_n y_n \\
\vdots & \vdots & \cdot\cdot & \vdots& \vdots& \vdots& \vdots &\cdot\cdot&\vdots\\
    k_1^{n} v_1 & k_2^{n} v_2 & \cdot\cdot & k_n^{n} v_n & k^{n} v&      l_1^{n} y_1 & l_2^{n}
    y_2 & \cdot\cdot    & l_n^{n} y_n 
  \end{vmatrix}} {(-1)^{\frac{n(n-1)}2} \prod_{1\leq i,j \leq
      n} (l_j - k_i)  D_n \; \prod_{1\leq j\leq n}
    (l_j -k)  }
  \end{equation}
\begin{equation}
 X_n = \frac{\begin{vmatrix}
    u_1 & u_2 & \cdot\cdot & u_n &      x_1 & x_2 & \cdot\cdot & x_n & x\\
    k_1 u_1 & k_2 u_2 & \cdot\cdot & k_n u_n &      l_1 x_1 & l_2 x_2 & \cdot\cdot
    & l_n x_n & l x\\
\vdots & \vdots & \cdot\cdot & \vdots& \vdots& \vdots &\cdot\cdot&\vdots&\vdots\\
    k_1^{n} u_1 & k_2^{n} u_2 & \cdot\cdot & k_n^{n} u_n &       l_1^{n} x_1 & l_2^{n}
    x_2 & \cdot\cdot    & l_n^{n} x_n & l^{n} x\\
    v_1 & v_2 & \cdot\cdot & v_n &      y_1 & y_2 & \cdot\cdot & y_n & y\\
    k_1 v_1 & k_2 v_2 & \cdot\cdot & k_n v_n &      l_1 y_1 & l_2 y_2 & \cdot\cdot
    & l_n y_n & l y\\
\vdots & \vdots & \cdot\cdot & \vdots&  \vdots& \vdots &\cdot\cdot&\vdots&\vdots\\
    k_1^{n-1} v_1 & k_2^{n-1} v_2 & \cdot\cdot & k_n^{n-1} v_n &       l_1^{n-1} y_1 & l_2^{n-1}
    y_2 & \cdot\cdot    & l_n^{n-1} y_n & l^{n-1} y
  \end{vmatrix}}  {(-1)^n(-1)^{\frac{n(n-1)}2} \prod_{1\leq i,j \leq
      n} (l_j - k_i)  D_n \; \prod_{1\leq i\leq n}
    (l -k_i)  } 
 \end{equation}
\begin{equation}
Y_n  = \frac{\begin{vmatrix}
    u_1 & u_2 & \cdot\cdot & u_n &      x_1 & x_2 & \cdot\cdot & x_n & x\\
    k_1 u_1 & k_2 u_2 & \cdot\cdot & k_n u_n &      l_1 x_1 & l_2 x_2 & \cdot\cdot
    & l_n x_n & l x\\
\vdots & \vdots & \cdot\cdot & \vdots& \vdots& \vdots &\cdot\cdot&\vdots&\vdots\\
    k_1^{n-1} u_1 & k_2^{n-1} u_2 & \cdot\cdot & k_n^{n-1} u_n &      l_1^{n-1} x_1 & l_2^{n-1}
    x_2 & \cdot\cdot    & l_n^{n-1} x_n & l^{n-1} x\\
    v_1 & v_2 & \cdot\cdot & v_n &      y_1 & y_2 & \cdot\cdot & y_n & y\\
    k_1 v_1 & k_2 v_2 & \cdot\cdot & k_n v_n &      l_1 y_1 & l_2 y_2 & \cdot\cdot
    & l_n y_n & l y\\
\vdots & \vdots & \cdot\cdot & \vdots& \vdots& \vdots &\cdot\cdot&\vdots&\vdots\\
    k_1^{n} v_1 & k_2^{n} v_2 & \cdot\cdot & k_n^{n} v_n &      l_1^{n} y_1 & l_2^{n}
    y_2 & \cdot\cdot    & l_n^{n} y_n & l^{n} y
  \end{vmatrix}} {(-1)^{\frac{n(n-1)}2} \prod_{1\leq i,j \leq
      n} (l_j - k_i) D_n \; \prod_{1\leq i\leq n}
    (l -k_i)  } 
 \end{equation}
Dans chacune de ces expressions apparaît au dénominateur la
quantité 
\begin{equation}
  D = (-1)^{\frac{n(n-1)}2} \prod_{1\leq i,j \leq
      n} (l_j - k_i) \left| \frac{y_ju_i-x_jv_i}{l_j-k_i}\right|_{1\leq i,j\leq n} 
\end{equation}
qui est à chaque fois le mineur $2n\times 2n$ obtenu en
supprimant dans le dénominateur la ligne et la colonne
contenant, respectivement, $k^n u$, $k^n v$, $l^n x$, $l^n y$.

\section{Un déterminant factorisant}

Nous examinons le cas particulier:
\begin{equation}
  x_i = u_i, y_i = - v_i, l_i = -k_i, 1\leq i\leq n
\end{equation}
\begin{equation}
 1\leq i,j\leq n\implies \frac{y_j u_i - x_j v_i}{l_j - k_i} = \frac{u_i v_j  + v_j u_i}{k_i + k_j}
\end{equation}
Les formules \eqref{eq:UX} et \eqref{eq:VY} montrent qu'en considérant $U_n$,
$V_n$, $X_n$, $Y_n$ comme fonctions de trois variables, les $n$ triplets
$(u_i,v_i,k_i)$, $1\leq i\leq n$ étant fixés, on a les relations:
\begin{align}
  X_n(u,-v,-k) &= U_n(u,v,k)\\
Y_n(u,-v,-k) &= -V_n(u,v,k)
\end{align}
\begin{equation}
\frac{D_{n+1}(u,v,k;x,y,l)}{D_n} =
\frac{U_n(u,v,k)V_n(x,-y,-l)+V_n(u,v,k)U_n(x,-y,-l)}{k + (-l)}
\end{equation}
Si nous imposons les relations supplémentaires $x=u$, $y=-v$, $l=-k$, nous
obtenons la factorisation
\begin{equation}\label{eq:DD}
  \frac{D_{n+1}(u,v,k)}{D_n} = \frac1k U_n(u,v,k) V_n(u,v,k)
\end{equation}
L'équation \eqref{eq:Ubig} donne comme  valeur de $U_n$:
\begin{equation}\label{eq:Ubig2}
  \frac{\begin{vmatrix}
    u_1 & u_2 & \cdot\cdot & u_n &u&      u_1 & u_2 & \cdot\cdot & u_n \\
    k_1 u_1 & k_2 u_2 & \cdot\cdot & k_n u_n & k u&      -k_1 u_1 & -k_2 u_2 & \cdot\cdot
    & -k_n u_n \\
\vdots & \vdots & \cdot\cdot & \vdots& \vdots& \vdots& \vdots &\cdot\cdot&\vdots\\
    k_1^n u_1 & k_2^n u_2 & \cdot\cdot & k_n^n u_n & k^n u&      (-1)^n k_1^n
    u_1 & (-1)^n k_2^n
    u_2 & \cdot\cdot    &(-1)^n k_n^n u_n \\
    v_1 & v_2 & \cdot\cdot & v_n &v&      -v_1 & -v_2 & \cdot\cdot & -v_n \\
    k_1 v_1 & k_2 v_2 & \cdot\cdot & k_n v_n & k v&      k_1 v_1 & k_2 v_2 & \cdot\cdot
    & k_n v_n \\
\vdots & \vdots & \cdot\cdot & \vdots& \vdots& \vdots& \vdots &\cdot\cdot&\vdots\\
    k_1^{n-1} v_1 & k_2^{n-1} v_2 & \cdot\cdot & k_n^{n-1} v_n & k^{n-1} v&
    (-1)^n k_1^{n-1} v_1 & (-1)^n k_2^{n-1}
    v_2 &\cdot\cdot    & (-1)^n k_n^{n-1} v_n 
  \end{vmatrix}} {(-1)^{\frac{n(n+1)}2} D_n \prod_{1\leq i,j \leq
      n} (k_i + k_j) \prod_{1\leq j\leq n}
    (k+k_j)  }
\end{equation}
On fait les $\left[\frac{n}2\right]$ permutations de lignes
$2k\leftrightarrow 2k+n+1$, $1\leq 2k\leq n$. Puis, lorsque $n$ est impair il
faut aussi mettre la $(n+1)$\ieme{} ligne en dernière position. On a donc un
signe $(-1)^n (-1)^{\frac{n(n-1)}2} = (-1)^{\frac{n(n+1)}2}$. Puis, on soustrait les
$n$ premières colonnes des $n$ dernières. Ceci met le déterminant sous une
forme par blocs, et donne, si $n$ est pair:
\begin{equation}
  U_n = \frac{\begin{vmatrix}
    u_1 & u_2 & \cdot\cdot & u_n &u\\
    k_1 v_1 & k_2 v_2 & \cdot\cdot & k_n v_n & k v\\
\vdots & \vdots & \cdot\cdot & \vdots& \vdots\\
    k_1^n u_1 & k_2^n u_2 & \cdot\cdot & k_n^n u_n & k^n u
  \end{vmatrix}_{n+1\times n+1}\begin{vmatrix}
    v_1 & v_2 & \cdot\cdot & v_n \\
    k_1 u_1 & k_2 u_2 & \cdot\cdot & k_n u_n \\
\vdots & \vdots & \cdot\cdot & \vdots\\
    k_1^{n-1} u_1 & k_2^{n-1} u_2 & \cdot\cdot & k_n^{n-1} u_n 
  \end{vmatrix}_{n\times n}}{2^{-n}D_n\prod_{1\leq i,j \leq
      n} (k_i + k_j) \prod_{1\leq j\leq n}
    (k+k_j)}
\end{equation}
et pour $n$ impair:
\begin{equation}
  U_n = \frac{\begin{vmatrix}
    u_1 & u_2 & \cdot\cdot & u_n \\
    k_1 v_1 & k_2 v_2 & \cdot\cdot & k_n v_n \\
\vdots & \vdots & \cdot\cdot & \vdots\\
    k_1^{n-1} u_1 & k_2^{n-1} u_2 & \cdot\cdot & k_n^{n-1} u_n 
  \end{vmatrix}_{n\times n}\begin{vmatrix}
    v_1 & v_2 & \cdot\cdot & v_n & v\\
    k_1 u_1 & k_2 u_2 & \cdot\cdot & k_n u_n & ku\\
\vdots & \vdots & \cdot\cdot & \vdots&\vdots\\
    k_1^{n} u_1 & k_2^{n} u_2 & \cdot\cdot & k_n^{n} u_n & k^n u
  \end{vmatrix}_{n+1\times n+1}}{ 2^{-n} D_n\prod_{1\leq i,j \leq
      n} (k_i + k_j) \prod_{1\leq j\leq n}
    (k+k_j)}
\end{equation}

L'équation \eqref{eq:Vbig} donne comme valeur de $V_n$:
\begin{equation}\label{eq:Vbig2}
  \frac{\begin{vmatrix}
    u_1 & u_2 & \cdot\cdot & u_n &u&      u_1 & u_2 & \cdot\cdot & u_n \\
    k_1 u_1 & k_2 u_2 & \cdot\cdot & k_n u_n & k u&      -k_1 u_1 & -k_2 u_2 & \cdot\cdot
    & -k_n u_n \\
\vdots & \vdots & \cdot\cdot & \vdots& \vdots& \vdots& \vdots &\cdot\cdot&\vdots\\
    k_1^{n-1} u_1 & k_2^{n-1} u_2 & \cdot\cdot & k_n^{n-1} u_n & k^{n-1} u&      (-1)^{n-1} k_1^{n-1}
    u_1 & (-1)^{n-1} k_2^{n-1}
    u_2 & \cdot\cdot    &(-1)^{n-1} k_n^{n-1} u_n \\
    v_1 & v_2 & \cdot\cdot & v_n &v&      -v_1 & -v_2 & \cdot\cdot & -v_n \\
    k_1 v_1 & k_2 v_2 & \cdot\cdot & k_n v_n & k v&      k_1 v_1 & k_2 v_2 & \cdot\cdot
    & k_n v_n \\
\vdots & \vdots & \cdot\cdot & \vdots& \vdots& \vdots& \vdots &\cdot\cdot&\vdots\\
    k_1^{n} v_1 & k_2^{n} v_2 & \cdot\cdot & k_n^{n} v_n & k^{n} v&
    (-1)^{n+1} k_1^{n} v_1 & (-1)^{n+1} k_2^{n}
    v_2 &\cdot\cdot    & (-1)^{n+1} k_n^{n} v_n 
  \end{vmatrix}} {(-1)^{\frac{n(n-1)}2} D_n \prod_{1\leq i,j \leq
      n} (k_i + k_j) \prod_{1\leq j\leq n}
    (k+k_j)  }
\end{equation}
On fait les $\left[\frac{n}2\right]$ permutations de lignes
$2k\leftrightarrow 2k+n$, $1\leq 2k\leq n$. Puis, lorsque $n$ est impair il
faut aussi mettre la dernière ligne en $(n+1)$\ieme{} position. On obtient, pour $n$  pair:
\begin{equation}
  V_n = \frac{\begin{vmatrix}
    u_1 & u_2 & \cdot\cdot & u_n \\
    k_1 v_1 & k_2 v_2 & \cdot\cdot & k_n v_n \\
\vdots & \vdots & \cdot\cdot & \vdots\\
    k_1^{n-1} v_1 & k_2^{n-1} v_2 & \cdot\cdot & k_n^{n-1} v_n 
  \end{vmatrix}_{n\times n}\begin{vmatrix}
    v_1 & v_2 & \cdot\cdot & v_n & v\\
    k_1 u_1 & k_2 u_2 & \cdot\cdot & k_n u_n& ku \\
\vdots & \vdots & \cdot\cdot & \vdots&\vdots\\
    k_1^{n} v_1 & k_2^{n} v_2 & \cdot\cdot & k_n^{n} v_n & k^{n} v
  \end{vmatrix}_{n+1\times n+1}}{2^{-n}D_n\prod_{1\leq i,j \leq
      n} (k_i + k_j) \prod_{1\leq j\leq n}
    (k+k_j)}
\end{equation}
et pour $n$ impair:
\begin{equation}
  V_n = \frac{\begin{vmatrix}
    u_1 & u_2 & \cdot\cdot & u_n & u\\
    k_1 v_1 & k_2 v_2 & \cdot\cdot & k_n v_n & kv\\
\vdots & \vdots & \cdot\cdot & \vdots & \vdots\\
    k_1^{n} v_1 & k_2^n v_2 & \cdot\cdot & k_n^{n} v_n & k^n v
  \end{vmatrix}_{n+1\times n+1}\begin{vmatrix}
    v_1 & v_2 & \cdot\cdot & v_n \\
    k_1 u_1 & k_2 u_2 & \cdot\cdot & k_n u_n \\
\vdots & \vdots & \cdot\cdot & \vdots\\
    k_1^{n-1} v_1 & k_2^{n-1} v_2 & \cdot\cdot & k_n^{n-1} v_n 
  \end{vmatrix}_{n\times n}}{2^{-n} D_n\prod_{1\leq i,j \leq
      n} (k_i + k_j) \prod_{1\leq j\leq n}
    (k+k_j)}
\end{equation}

La même technique, en partant de
\eqref{eq:6}, permet la factorisation de $D_{n+1}$ (et de
$D_{n}$). On obtient:
\begin{equation}
  D_{n+1} = 2^{n+1}\frac{\begin{vmatrix}
    u_1 & u_2 & \cdot\cdot & u_n & u \\
    k_1 v_1 & k_2 v_2 & \cdot\cdot & k_n v_n & kv\\
    k_1^2 u_1 & k_2^2 u_2 & \cdot\cdot & k_n^2 u_n & k^2 u\\
\vdots & \vdots & \cdot\cdot & \vdots&\vdots\\
    & & \cdots & 
  \end{vmatrix}_{n+1\times n+1} \begin{vmatrix}
    v_1 & v_2 & \cdot\cdot & v_n & v\\
    k_1 u_1 & k_2 u_2 & \cdot\cdot & k_n u_n  & ku \\
    k_1^2 v_1 & k_2^2 v_2 & \cdot\cdot & k_n^2 v_n & k^2 v\\
\vdots & \vdots & \cdot\cdot & \vdots&\vdots\\
    & & \cdots &  
  \end{vmatrix}_{n+1\times n+1}}{\prod_{1\leq i,j \leq n+1} (k_i+
        k_j) }
\end{equation}
\begin{equation}
   D_n = \left|\frac{u_i v_j  + v_j u_i}{k_i + k_j}\right|_{n\times n} = 2^n\frac{\begin{vmatrix}
    u_1 & u_2 & \cdot\cdot & u_n \\
    k_1 v_1 & k_2 v_2 & \cdot\cdot & k_n v_n \\
\vdots & \vdots & \cdot\cdot & \vdots\\
    & & \cdots & 
  \end{vmatrix}_{n\times n} \begin{vmatrix}
    v_1 & v_2 & \cdot\cdot & v_n \\
    k_1 u_1 & k_2 u_2 & \cdot\cdot & k_n u_n \\
\vdots & \vdots & \cdot\cdot & \vdots\\
    & & \cdots &  
  \end{vmatrix}_{n\times n}}{\prod_{1\leq i,j \leq n} (k_i+
        k_j) }
\end{equation}
Donc le déterminant
\begin{equation}\label{eq:fin1}
  \begin{vmatrix}
    u_1 & u_2 & \cdot\cdot & u_n &u\\
    k_1 v_1 & k_2 v_2 & \cdot\cdot & k_n v_n & k v\\
    k_1^2 u_1 & k_2^2 u_2 & \cdot\cdot & k_n^2 u_n & k^2 u\\
\vdots & \vdots & \cdot\cdot & \vdots&\vdots\\
 & \cdots & \cdots & \cdots & 
  \end{vmatrix}_{n+1\times n+1}
\end{equation}
divisé par son mineur principal de taille $n\times n$ et par le produit
$\prod_{1\leq j\leq n} (k+k_j)$ est égal, pour $n$ pair à $U_n$ (dans ce cas
il y a $k^n u$ en position $(n+1,n+1)$ dans \eqref{eq:fin1}) et pour $n$
impair à $V_n$ (dans ce cas
il y a $k^n v$ en position $(n+1,n+1)$). Le déterminant
\begin{equation}\label{eq:fin2}
  \begin{vmatrix}
    v_1 & v_2 & \cdot\cdot & v_n & v\\
    k_1 u_1 & k_2 u_2 & \cdot\cdot & k_n u_n& ku \\
    k_1^2 v_1 & k_2^2 v_2 & \cdot\cdot & k_n^2 v_n & k^2 v\\
\vdots & \vdots & \cdot\cdot & \vdots&\vdots\\
 & \cdots & \cdots & \cdots & 
  \end{vmatrix}_{n+1\times n+1}
\end{equation}
divisé par son mineur principal de taille $n\times n$ et par le produit
$\prod_{1\leq j\leq n} (k+k_j)$ est égal à $V_n$ pour $n$ pair (il y a alors
$k^n v$ en bas à droite dans \eqref{eq:fin2}) et à $U_n$
pour $n$ impair ($k^n u$ en bas à droite  dans \eqref{eq:fin2}).
Conformément à \eqref{eq:DD}, le quotient $\frac{D_{n+1}}{D_n}$ est égal à
$\frac1k U_n V_n$  (pour $x=u$, $y=-v$, $l=-k$).

\clearpage

\end{document}